\theoremstyle{thmstyleone}%
\newtheorem{proposition}{Proposition}% 
\newtheorem{definition}{Definition}
\newtheorem{corollary}{Corollary}
\begin{document}

\title{Runge--Kutta methods determined from extended phase space methods for Hamiltonian systems}
\author{\fnm{Robert I.} \sur{McLachlan}}\email{r.mclachlan@massey.ac.nz}
\affil{\orgdiv{School of Mathematical and Computational Sciences}, \orgname{Massey University}, \orgaddress{\city{Palmerston North}, \postcode{4472}, \country{New Zealand}}}

\VerbatimFootnotes

\abstract{ We study two existing extended phase space integrators for Hamiltonian systems, the {\em midpoint projection method} and the {\em symmetric projection method}, showing that the first is a pseudosymplectic and pseudosymmetric Runge--Kutta method and the second is a monoimplicit symplectic Runge--Kutta method.
}

\maketitle

\section{Introduction}
Many commonly used numerical methods for the time integration of differential equations can be expanded in B-series which elucidate their geometric and numerical properties \cite{butcher,mk2}. However, symplectic integrators with a B-series are implicit, the implicit midpoint rule being a central example \cite{hlw}. Explicit symplectic integrators exist for some systems, such as  separable classical mechanical systems \cite{mc-qu}. To avoid this restriction, Pihajoki \cite{pihojoki} introduced {\em extended phase space methods}: a new Hamiltonian, defined on the product of two copies of the original phase space, is constructed, that is amenable to explicit symplectic integration in the extended phase space.

In place of the Hamiltonian system $X_H$ associated with the Hamiltonian $H$ and canonical symplectic form $\omega$,
\begin{equation}
\label{eq:ham}
\dot q = D_2H(q,p),\quad \dot p = -D_1 H(q,p)
\end{equation}
($(q,p)\in\mathbb{R}^{2d}$),
Pihajoki considered the extended system
$$ \dot q = D_2H(x,p),\quad \dot p = -D_1H(q,y)$$
$$ \dot x = D_2H(q,y),\quad \dot y = -D_1H(x,p)$$
($(q,x,p,y)\in\mathbb{R}^{4d}$)
with initial condition
$$ (q(0),x(0),p(0),y(0))=(q_0,x_0,p_0,y_0) = (q_0,q_0,p_0,p_0)$$
such that the solution obeys $q(t)=x(t)$ and $p(t)=y(t)$ for all $t$ and $(q(t),p(t))$ satisfies the original system \eqref{eq:ham} with initial condition
$(q(0),p(0))=(q_0,p_0)$.
The extended system is Hamiltonian with extended Hamiltonian $\hat H = \hat H_A + \hat B_B$, 
$\hat H_A = H(x,p)$, $\hat H_B = H(q,y)$ and symplectic form
$\hat\omega := dq\wedge dp + dx\wedge dy$. 

As $x$ and $p$ (resp. $q$ and $y$) commute, the flow $\exp\big(t X_{\hat H_A}\big)$ of Hamilton's equations for $\hat H_A$ is given explicitly by Euler's method:
\begin{align*}
q(t) &= q_0 + t D_2 H(x_0,p_0) \\
x(t) &= x_0 \\
p(t) &= p_0 \\
y(t) &= y_0 - t D_1 H(x_0,p_0)
\end{align*}
(and analogously for $\hat H_B$).
The integrator
$$\Phi_h := \exp\big(\frac{1}{2}h X_{\hat H_A}\big) \exp\big(h X_{\hat H_B}\big) \exp\big(\frac{1}{2}h X_{\hat H_A}\big) $$
is therefore explicit, second order, and preserves the extended symplectic form $\hat\omega$.

Two key issues, however, immediately arise: the `duplicate' point $(x,y)$ may move away from the `base' point $(q,p)$; and it is not clear how symplecticity in the extended phase space is advantageous in the original phase space. Tao \cite{tao} addressed the first point by adding a coupling term $\frac{1}{2}\alpha(\|x-q\|^2 + \|y-p\|^2)$ to the extended Hamiltonian, finding that this could suppress the growth of $(q-x,p-y)$. Other authors have addressed the second point by projecting the solution to the original phase space in different ways.
Two of these, the {\em symmetric projection method} of Ohsawa \cite{ohsawa} and the {\em midpoint projection method} of Luo et al. \cite{luo}, are the subject of this paper.

The midpoint projection method, considered in Section 2, is shown to be equivalent to an explicit Runge--Kutta method that is pseudosymplectic (that is, approximately symplectic) and pseudosymmetric up to surprisingly high order -- order 5 for the leapfrog-based method of classical order 2, and order 9 for the methods of classical order 4. We suggest that these properties account for the methods' good performance in astrophysical applications.

The symmetric projection methods, considered in Section 3, are shown to be equivalent to monoimplicit symplectic Runge--Kutta methods, revealing their affine equivariance and generality.

\section{The midpoint projection method}
Luo et al. \cite{luo} composed such an extended phase space integrator with the midpoint projection\footnote{Called the midpoint permutation in \cite{luo}.}
$$ \pi\colon\mathbb{R}^{4d}\to\mathbb{R}^{2d},\quad(q,x,p,y) \mapsto \left(\frac{q+x}{2},\frac{p+y}{2}\right)$$
to yield an explicit integrator on the original phase space. These methods were called `symplectic-like' `because they, like standard implicit symplectic integrators, show no drift in the energy error' \cite{luo}. This lack of energy drift has been observed in many astrophysical simulations with nonseparable Hamiltonians without explanation, and the method has become quite popular \cite{li,pan}.
The order can be increased using composition methods \cite{mc-qu}.
The following result accounts for the greatly reduced energy drift. 

\begin{proposition}
The $s$-stage midpoint projected methods of the form
$$\varphi_h := \pi \circ \prod_{i=1}^s \Phi_{\alpha_i h}$$
 are equivalent to $2s+1$-stage explicit Runge--Kutta methods of at least the same classical order as the underlying composition method. In the  three cases
 \begin{enumerate}
 \item $s=1$, $\alpha_1=1$ (the standard extended phase space integrator with midpoint projection, of classical order 2);
 \item $s=3$, $(\alpha_1,\alpha_2,\alpha_3)=(\alpha,1-2\alpha,\alpha)$, $\alpha=1/(2-2^{1/3})$ (classical order 4);
 \item $s=5$, $(\alpha_1,\dots,\alpha_5)=(\alpha,\alpha,1-4\alpha,\alpha,\alpha)$, $\alpha=1/(4-4^{1/3})$ (classical order 4).
 \end{enumerate}
the methods have pseudosymplecticity order $k:=5$, $9$, and $9$ respectively, and pseudosymmetry order $5$, $9$, and $9$ respectively.
That is, $\varphi_h^*\omega = \omega + \mathcal{O}(h^{k+1})$  and $\varphi_h\circ\varphi_{-h} = id + \mathcal{O}(h^{k+1})$.
\end{proposition}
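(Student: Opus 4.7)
My strategy is to unroll $\varphi_h$ into an explicit Runge--Kutta tableau and then estimate the defects in symplecticity and symmetry via the coordinate change $u := (q+x)/2$, $v := (p+y)/2$, $\xi := q-x$, $\eta := p-y$ on the extended phase space, on which the diagonal $\Delta := \{\xi = \eta = 0\}$ is an invariant manifold of the exact extended flow.

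\emph{Step 1 (RK form and classical order).}  Each factor $\Phi_{\alpha_i h}$ consists of three explicit Euler updates: the outer substeps (flows of $\hat H_A$) evaluate $(D_1 H, D_2 H)$ at the current $(x,p)$ and advance only $(q,y)$; the middle substep (flow of $\hat H_B$) evaluates at the current $(q,y)$ and advances only $(x,p)$.  Because the final outer substep does not touch $(x,p)$, the last evaluation of $\Phi_{\alpha_i h}$ coincides with the first evaluation of $\Phi_{\alpha_{i+1} h}$, so the $s$ factors share $s-1$ stages for a total of $3s-(s-1) = 2s+1$ function evaluations.  Starting from $\iota(q_0,p_0) := (q_0,q_0,p_0,p_0)$, each stage argument is an affine combination of $(q_0,p_0)$ and earlier stage derivatives with $h$-linear weights, and $\pi$ produces a convex combination of the four output components; this is the Butcher form of an explicit $(2s+1)$-stage RK scheme.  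The classical order follows because the exact extended flow preserves $\Delta$ and restricts there to the original flow, while $\Psi_h := \prod_i \Phi_{\alpha_i h}$ has order $p$ on the extended phase space, so both $q_1$ and $x_1$ approximate $q(h)$ with local error $O(h^{p+1})$, an estimate preserved by averaging.

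\emph{Step 2 (pseudosymplecticity).}  A direct calculation in the new coordinates yields $\hat\omega = 2\,du\wedge dv + \tfrac{1}{2}\,d\xi\wedge d\eta$.  Since $\Psi_h^*\hat\omega = \hat\omega$, one may solve to obtain $\Psi_h^*(du\wedge dv) = du\wedge dv + \tfrac{1}{4}\big(d\xi\wedge d\eta - \Psi_h^*(d\xi\wedge d\eta)\big)$.  Pulling back by $\iota$ annihilates $d\xi$ and $d\eta$, giving the clean identity
\[
\varphi_h^*\omega \;=\; \omega \;-\; \tfrac{1}{4}\,d\tilde\xi\wedge d\tilde\eta, \qquad \tilde\xi := \xi\circ\Psi_h\circ\iota, \ \ \tilde\eta := \eta\circ\Psi_h\circ\iota.
\]
Because the exact flow preserves $\Delta$ while $\Psi_h$ is accurate to order $p$, both $\tilde\xi$ and $\tilde\eta$ are $O(h^{p+1})$ as smooth functions of $(Q,P)$, and so are their differentials; the wedge is then $O(h^{2p+2})$, giving pseudosymplecticity order $2p+1 \in \{5,9,9\}$.

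\emph{Step 3 (pseudosymmetry) and main obstacle.}  The composition $\Psi_h$ is symmetric on the extended phase space (palindromic coefficients together with symmetric $\Phi_{\alpha h}$), so $\Psi_h\circ\Psi_{-h} = \mathrm{id}$ and pseudosymmetry can fail only through the central $\iota\circ\pi$.  Writing $(u',v',\xi',\eta') := \Psi_{-h}(\iota(Q,P))$, one has $\xi',\eta' = O(h^{p+1})$ as in Step~2 and $\Psi_h(u',v',\xi',\eta') = \iota(Q,P)$; Taylor expansion about $(u',v',0,0) = (\iota\circ\pi)(u',v',\xi',\eta')$ then gives
\[
\varphi_h\circ\varphi_{-h}(Q,P) - (Q,P) \;=\; -\,\pi'\,\partial_{(\xi,\eta)}\Psi_h\big|_\Delta \cdot (\xi',\eta') \;+\; O(h^{2p+2}).
\]
The crucial fact is that the extended Hamiltonian and symplectic form are invariant under the involution $S : (q,x,p,y) \mapsto (x,q,y,p)$, equivalently $(\xi,\eta)\mapsto(-\xi,-\eta)$.  $S$-equivariance of the exact flow decouples the variational equation along a diagonal trajectory into a $(u,v)$-block and a $(\xi,\eta)$-block, so $\partial_{(\xi,\eta)}(U_{\mathrm{exact}},V_{\mathrm{exact}})|_\Delta = 0$; order-$p$ accuracy of $\Psi_h$ then forces the corresponding numerical block to be $O(h^{p+1})$, and the total defect is $O(h^{2p+2})$, yielding pseudosymmetry order $2p+1$.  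The main obstacle is this last point: the individual splitting factors $\Phi_{\alpha_i h}$ are not themselves $S$-equivariant (they exchange $\hat H_A \leftrightarrow \hat H_B$), so the $O(h^{p+1})$ cancellation in the mixed Jacobian block of $\Psi_h$ cannot be read off from the scheme's structure and must be deduced from the standard local-error estimate for $\Psi_h$ together with a smoothness argument that transfers order bounds from the map to its Jacobian.
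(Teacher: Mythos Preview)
Your proof is correct and takes a genuinely different route from the paper. The paper writes out the explicit Runge--Kutta tableau (for $s=1$ and $s=3$), computes the associated B-series, and checks the pseudosymplecticity conditions $a(u)a(v)-a(u\circ v)-a(v\circ u)=0$ tree by tree via computer algebra up through the claimed order, with pseudosymmetry handled the same way by expanding $\varphi_h\circ\varphi_{-h}$ in B-series; so the paper's argument is a direct finite verification for each of the three specific parameter choices. Your argument instead derives the closed identity $\varphi_h^*\omega-\omega=-\tfrac14\,d\tilde\xi\wedge d\tilde\eta$ from preservation of the extended form $\hat\omega$ and then reads off pseudosymplecticity order $2p+1$ from $\tilde\xi,\tilde\eta=O(h^{p+1})$, with an analogous $S$-symmetry argument for pseudosymmetry. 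This is more conceptual: it explains the pattern $k=2p+1$, it applies uniformly to any symmetric $\hat\omega$-preserving extended integrator of order $p$ (not just the three listed cases), and it is exactly the ``direct geometric proof, not relying on B-series'' that the paper's Discussion section explicitly asks for. The tradeoff is that your argument yields only lower bounds on $k$, whereas the paper's computation also establishes sharpness (e.g.\ three of the sixteen order-$6$ symplecticity conditions fail for $s=1$); for the $\mathcal{O}$-statements in the proposition the lower bound is enough. One point worth making fully explicit in a final write-up is the smoothness step you flag at the end: the local error of a composition method on a smooth vector field has a Taylor expansion in $h$ whose coefficients are smooth functions of the initial data, so the $O(h^{p+1})$ bounds on $\tilde\xi,\tilde\eta$ and on the mixed Jacobian block of $\Psi_h$ indeed transfer to their $z_0$-derivatives.
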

\begin{proof}
We begin by noting that the extended phase space methods do not rely on the partitioning into $(q,p)$ variables, but can be written in an affine-equivariant way that exhibits how they can be applied to any ordinary differential equation. For the ODE $\dot z = f(z)$, $z\in\mathbb{R^d}$, we consider the duplicated (i.e. extended) system
\begin{align}
\label{eq:ext}
\begin{split}
\dot z &= f(\hat z),\\
\dot{\hat z} &= f(z)
\end{split}
\end{align}
which is separable and can be integrated by splitting and composition. When $z=(q,y)$, $\hat z=(x,p)$, and $f(z)=X_H(z)$, this yields the method above. When $f$ preserves $dz\wedge Jz$, \eqref{eq:ext} preserves $dz\wedge Jd\hat z$.
We write out the method $\pi\circ \Phi_h$ first in $(z,\hat z)$ variables, with initial conditions $(z_0,\hat z_0)$:
\begin{align*}
z_{1/2} &= z_0 + \frac{1}{2}h f(\hat z_0)\\
\hat z_1 &= \hat z_0 + h f(z_1) \\
z_1 &= z_{1/2} + \frac{1}{2}h f(\hat z_1)\\
\pi(z_1,\hat z_1) &= (z_1+\hat z_1)/2
\end{align*}
Imposing $\hat z_0 = z_0$, this can be written in Runge--Kutta form as
\begin{align*}
Z_1 &= z_0\\
Z_2 &= z_0 + \frac{1}{2}h f(Z_1)\\
Z_3 &= z_0 + h f(Z_2)\\
Z_4 &= Z_2 + \frac{1}{2}h f(Z_3)\\
 &= z_0 + h \left(\frac{1}{2}f(Z_1) + \frac{1}{2}f(Z_2)\right)\\
z_1 &= (Z_3+Z_4)/2\\
   &= z_0 + h \left(\frac{1}{4}f(Z_1) + \frac{1}{2}f(Z_2) + \frac{1}{4}f(Z_3)\right)
   \end{align*}
This is a 3-stage explicit Runge--Kutta method with Butcher tableau
$$
\renewcommand\arraystretch{1.2}
\begin{array}{c|ccc}
0 & 0 & 0 & 0 \\
\frac{1}{2} & \frac{1}{2} & 0 & 0 \\
1 & 0 & 1 & 0 \\
\hline
 & \frac{1}{4} & \frac{1}{2} & \frac{1}{4} \\
 \end{array}.
$$
For this method we compute its B-series and check the pseudosymplecticity conditions%
\footnote{We evaluated the symplecticity conditions $a(u)a(v)-a(u\circ v)-a(v\circ u)$ where $u$ and $v$ are Butcher trees in Mathematica using
\begin{verbatim}
<<NumericalDifferentialEquationAnalysis`
ButcherProduct[u_, v_] := If[ByteCount[v]==0, \[FormalF][u], ReplacePart[v, 1->v[[1]] u]]
Symplectic[u_, v_] := ButcherPhi[u] ButcherPhi[v] - ButcherPhi[ButcherProduct[u, v]] 
                      - ButcherPhi[ButcherProduct[v, u]]
\end{verbatim}
}
 \cite[VI.7.3]{hlw}. There are 1, 1, 1, 3, and 6 conditions respectively at orders 1, \dots, 5; these are all satisfied. Of the 16 order 6 conditions, 13 are satisfied and 3 are not, thus the method is pseudosymplectic of order 5%
. For pseudosymmetry, we expand $\varphi_h\circ \varphi_{-h}$ in B-series similarly.
 
 The calculation for the other methods proceeds similarly. For $s=3$ the Butcher tableau is
$$
\renewcommand\arraystretch{1.2}
\begin{array}{c|ccccccc}
0 & 0 & 0 & 0 & 0 & 0 & 0 & 0  \\
\frac{1}{2}\alpha_1 & \frac{1}{2}\alpha_1  & 0 & 0 & 0 & 0 & 0 & 0  \\
\alpha_1& 0 & \alpha_1 & 0 & 0 & 0 & 0 & 0  \\
\alpha_1+\frac{1}{2}\alpha_2 & \frac{1}{2}\alpha_1  & 0 & \frac{1}{2}\alpha_1 +\frac{1}{2}\alpha_2  & 0 & 0 & 0 & 0  \\
\alpha _1+\alpha_2&  0 & \alpha_1 & 0 & \alpha_2 & 0 & 0 & 0  \\
\alpha _1+\alpha _2+\frac{1}{2}\alpha_3&  \frac{1}{2}\alpha_1  & 0 & \frac{1}{2}\alpha_1 +\frac{1}{2}\alpha_2  & 0 & \frac{1}{2}\alpha_2 +\frac{1}{2}\alpha_3  & 0 & 0  \\
\alpha _1+\alpha _2+\alpha _3&  0 & \alpha_1 & 0 & \alpha_2 & 0 & \alpha_3 & 0  \\
%\alpha_1+\alpha _2+\alpha _3 & \frac{1}{2}\alpha_1  & 0 & \frac{1}{2}\alpha_1 +\frac{1}{2}\alpha_2  & 0 & \frac{1}{2}\alpha_2 +\frac{1}{2}\alpha_3  & 0 & \frac{1}{2}\alpha_3  & 0 \\
   \hline
   & \frac{1}{4}\alpha_1&\frac{1}{2}\alpha_1 &\frac{1}{4} \left(\alpha_1+\alpha_2\right)&\frac{1}{2}\alpha _2&\frac{1}{4} \left(\alpha_2+\alpha_3\right)&\frac{1}{2}\alpha _3&\frac{1}{4}\alpha_3\\

\end{array}
$$
\end{proof}

Pseudosymplecticity was introduced by Aubry and Chartier \cite{chartier}, who derive various explicit pseudosymplectic Runge--Kutta methods; the second order method above appears there as a member of a 1-parameter family of 3-stage methods of pseudosymplectic order 4, it being the unique member of that family that is pseudosymplectic of order 5.

The numerical examples in the astrophysics literature do not show energy drift. However, the potential drift effect is rather small and we have confirmed numerically in planar Hamiltonian systems that the energy does drift proportionally to $h^5 t$ resp. $h^9 t$ for the 2nd resp. 4th order methods above.  Symmetry also affects time integration and can moderate energy drift \cite{mc-pe}, so it is possible that pseudosymmetry is having some positive effect as well. The 3-stage 4th order method is known to have large error constants and it is possible that the 5-stage method given here, or some other explicit pseudosymplectic method, may be have advantages in these applications. On the other hand, energy behaviour is not the only manifestation of symplecticity and the choice of a symplectic vs a pseudosymplectic integrator may depend on the application.

\section{The symmetric projection method} 

Ohsawa \cite{ohsawa} defined the {\em extended phase space integrator with symmetric projection} as follows. 
Let
$$ D = \begin{bmatrix} I_d & -I_d & 0 & 0 \\ 0 & 0 & I_d & -I_d \end{bmatrix},\quad\mathcal{N}=\mathrm{ker} D = \{(q,q,p,p)\colon(q,p)\in\mathbb{R}^{2d}\}.$$
Given an extended phase space integrator $\Phi_h\colon \mathbb{R}^{4d}\to\mathbb{R}^{4d}$ and initial condition $z_0=(q_0,p_0)\in\mathbb{R}^{2d}$, the integrator is the map $z_0\mapsto z_1$ defined by
\begin{enumerate}
\item $ \zeta_0 := (q_0,q_0,p_0,p_0) $
\item Find $\mu\in\mathbb{R}^{2d}$ such that $\Phi_h(\zeta_0+D^\top\mu)+D^\top\mu\in\mathcal{N}$
\item $\hat\zeta_0 := \zeta_0 + D^\top\mu$
\item $\hat\zeta_1 := \Phi_h(\hat\zeta_0)$
\item $\zeta_1 = (q_1,q_1,p_1,p_1) := \hat\zeta_1 + D^\top \mu$
\item $z_1 = (q_1,p_1)$
\end{enumerate}
The method is well-defined, symmetric, and symplectic \cite{ohsawa}.
Jayawardana and Ohsawa \cite{ohsawa2} further show that the method preserves arbitrary quadratic invariants. Together these results give a strong indication that the method may be a symplectic B-series method. We  show below that this is true and that it is in fact equivalent to a monoimplicit Runge--Kutta method, a class introduced by Cash \cite{cash} that have only a single implicit stage. (The Simpson--AVF method
$z_1 = z_0 + \frac{1}{6}h(f(z_0) + 4 f((z_0+z_1)/2) + f(z_1))$ is an example \cite{ce}.) In a monoimplicit Runge--Kutta method the $A$ matrix of the Butcher tableau takes the form of a rank one matrix plus a matrix of zero spectral radius.

The extended phase space integrator with symmetric projection is also an instance of the class of {\em extended Runge--Kutta methods} that we now define.

\begin{definition}
For ODEs $\dot z = f(z)$, $z\in\mathbb{R}^d$, an extended Runge--Kutta method is defined by the equations
\begin{align*}
Z_i &= z_0 + h \sum_{j=1}^m a_{ij} k_j,\quad i=1,\dots,s \\
z_1 =& z_0 + h \sum_{i=1}^m b_i k_i\\
k_i &= f(Z_i),\quad i=1,\dots,s\\
0 &= \sum_{j=1}^{m} d_{ij}k_j,\quad i=1,\dots,m-s
\end{align*}
where the matrix $d_{ij}$ has full rank.
\end{definition}

Recall the central result of Munthe-Kaas et al. \cite{mk}: 
Let $\Phi = \{\Phi_n\}_{n\in\mathbb{N}}$ be an integration method, defined for all vector fields on all
dimensions $n$. Then $\Phi$ is a B-series method if and only if the property of affine
equivariance is fulfilled: if $a(x) := Ax + b$ is an affine map from $\mathbb{R}^m$ to $\mathbb{R}^n$, $f$ a vector field on $\mathbb{R}^m$, and $g$ a vector field on $\mathbb{R}^n$ such that
$g(Ax + b) = Af(x)$, then $a \circ\Phi_m(f) = \Phi_n(f)\circ a$. 

\begin{proposition}
Extended Runge--Kutta methods are affine equivariant.
\end{proposition}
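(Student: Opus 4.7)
The plan is to verify affine equivariance directly by transporting all internal quantities of the method through $a$ and $A$. Fix an affine map $a(x) = Ax + b$ from $\mathbb{R}^m$ to $\mathbb{R}^n$ and vector fields $f$ on $\mathbb{R}^m$, $g$ on $\mathbb{R}^n$ satisfying the intertwining relation $g(Ax + b) = A f(x)$. Let $(Z_i, k_j)$ solve the defining equations of the method applied to $f$ from $z_0$. I propose $\tilde Z_i := A Z_i + b$ and $\tilde k_j := A k_j$ as the corresponding internal quantities for the method applied to $g$ starting from $a(z_0)$, and verify these satisfy the defining equations.

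The verification is a routine linearity check in three parts. First, the stage relations for $g$ follow by applying $A$ to those for $f$ and adding $b$: $\tilde Z_i = A(z_0 + h\sum_j a_{ij}k_j) + b = a(z_0) + h\sum_j a_{ij}\tilde k_j$. Second, the slope equations $\tilde k_i = g(\tilde Z_i)$ for $i \le s$ are exactly the intertwining hypothesis applied at $Z_i$, since $g(\tilde Z_i) = g(A Z_i + b) = A f(Z_i) = A k_i = \tilde k_i$. Third, the linear constraints transport by linearity, $\sum_j d_{ij}\tilde k_j = A \sum_j d_{ij}k_j = 0$. The update step then yields $a(z_0) + h\sum_i b_i \tilde k_i = a(z_0 + h\sum_i b_i k_i) = a(\Phi_m(f)(z_0))$, which is the desired identity $\Phi_n(g)\circ a = a\circ \Phi_m(f)$.

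The main obstacle is a uniqueness concern involving the auxiliary slopes $k_j$ for $j > s$. Unlike the true function values $k_i = f(Z_i)$ for $i \le s$, these are determined only implicitly by the full-rank constraint equations, so I must argue that the method applied to $g$ at $a(z_0)$ actually produces the transported $\tilde k_j$ rather than some other solution of its implicit system. This requires well-posedness of the $g$-system: the full-rank hypothesis on $d$, combined with the implicit function theorem for sufficiently small $h$ (or, equivalently, the existence of a unique formal power series solution in $h$, which is the natural setting for applying the B-series theorem of Munthe-Kaas et al.\ quoted above), forces the $g$-solution to coincide with the transported $f$-solution. Apart from this well-posedness point, the argument is purely algebraic.
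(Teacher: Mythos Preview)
The paper states this proposition without proof, so there is nothing to compare against beyond noting that your direct verification is the natural argument and is correct. Transporting stages and slopes by $\tilde Z_i = AZ_i + b$, $\tilde k_j = Ak_j$ and checking that the stage equations, the slope equations $k_i=f(Z_i)$ for $i\le s$, the linear constraints, and the update step all survive is precisely what one would write down; the intertwining hypothesis $g(Ax+b)=Af(x)$ is used exactly once, at the slope equations, as you observe.

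Your attention to the well-posedness issue for the auxiliary $k_j$ with $j>s$ is appropriate and is the only non-mechanical point. Since these extra slopes are fixed only through the full-rank constraint $d$ and feed back into the $Z_i$ via $a_{ij}$, one needs the method itself to be well-defined before equivariance is meaningful; the formal power series viewpoint you invoke is the right setting for applying the B-series characterisation quoted just before the proposition. One cosmetic remark: the paper overloads the symbol $m$ (both the source dimension in the affine-equivariance statement and the number of slopes in the method definition), and your write-up inherits this; it causes no mathematical difficulty, but you may wish to rename one of them for clarity.
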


\begin{proposition}
Let $M$ be the $m\times m$ matrix defined by
$$ M_{ij} = b_i b_j - b_i a_{ij} - b_j a_{ji}$$
for $i,j=1,\dots,m$, where we have defined $a_{ij}=0$ for $i>s$.
Let $V$ be an $m\times s$ matrix whose columns form a basis for the nullspace of $d$.
If $b_i=0$ for $i=s+1,\dots,m$, and $V^TMV=0$, then the extended Runge--Kutta method
with parameters $a_{ij}$, $b_i$, and $d_{ij}$, is quadratic-preserving and symplectic.
\end{proposition}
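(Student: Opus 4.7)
The plan is to adapt the classical Cooper / Sanz-Serna identity for preservation of quadratic invariants by Runge--Kutta methods, accounting for the additional constraints that define the extended method. Let $Q(z) = z^\top S z$ with $S = S^\top$ be a quadratic invariant of $f$, meaning $z^\top S f(z) = 0$ for all $z$ (the symmetric part of $S$ is all that matters). Starting from $z_1 = z_0 + h \sum_{i=1}^m b_i k_i$, I would compute
$$Q(z_1) - Q(z_0) = 2h \sum_{i=1}^m b_i\, z_0^\top S k_i + h^2 \sum_{i,j=1}^m b_i b_j\, k_i^\top S k_j.$$
The hypothesis $b_i = 0$ for $i > s$ means that only indices $i \le s$ contribute to the linear term, and for those I substitute $z_0 = Z_i - h \sum_{j=1}^m a_{ij} k_j$ and use $Z_i^\top S k_i = Z_i^\top S f(Z_i) = 0$. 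Symmetrizing the resulting double sum in $i,j$ (using $k_j^\top S k_i = k_i^\top S k_j$ since $S^\top = S$) and observing that the convention $a_{ij}=0$ for $i>s$ makes the symmetrization uniform yields the standard identity
$$Q(z_1) - Q(z_0) = h^2 \sum_{i,j=1}^m M_{ij}\, k_i^\top S k_j,$$
with $M$ exactly the matrix in the statement.

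The genuinely new step is the reduction using the nullspace basis. The constraint $\sum_j d_{ij} k_j = 0$ forces, for each component $\alpha = 1, \dots, d$, the vector $\bigl((k_1)_\alpha,\dots,(k_m)_\alpha\bigr) \in \mathbb{R}^m$ to lie in $\ker d = \mathrm{span}\,V$. Writing $k_i = \sum_{\ell=1}^{s} V_{i\ell}\, c^{(\ell)}$ with $c^{(\ell)} \in \mathbb{R}^d$ and substituting collapses the bilinear form to
$$h^2 \sum_{\ell,\ell'=1}^{s} (V^\top M V)_{\ell \ell'}\, (c^{(\ell)})^\top S\, c^{(\ell')},$$
which vanishes under $V^\top M V = 0$. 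Hence $Q(z_1) = Q(z_0)$ and the method is quadratic-preserving.

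For symplecticity I would invoke the standard lifting-to-the-tangent-bundle trick of Bochev--Scovel and Sanz-Serna: when $f = J^{-1}\nabla H$ is Hamiltonian, the symplectic form $\omega(\xi,\eta) = \xi^\top J \eta$ is a quadratic invariant of the variational system $\dot{\xi} = Df(z)\xi$ augmenting $\dot z = f(z)$. By Proposition 2, the extended Runge--Kutta method is affine equivariant, hence a B-series method, hence commutes with the tangent-lifting construction; the quadratic-invariant result just proved, applied on the doubled space, then gives $\omega(\xi_1,\eta_1) = \omega(\xi_0,\eta_0)$, i.e.\ the step map is symplectic.

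The main obstacle is purely organisational: keeping the $m$-indexed and $s$-indexed sums straight and verifying that the conventions $a_{ij}=0$ and $b_i=0$ for $i>s$ are exactly what is needed to extend the Cooper/Sanz-Serna symmetrization uniformly over the full $m\times m$ index range. Once the identity $Q(z_1)-Q(z_0) = h^2 \sum M_{ij}\, k_i^\top S k_j$ is established, the reduction via $V$ is immediate and captures the essence of why the weaker condition $V^\top M V = 0$ suffices in place of the classical $M=0$.
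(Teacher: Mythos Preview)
Your argument is correct and follows the same line as the paper: the standard Cooper/Sanz-Serna expansion to obtain $Q(z_1)-Q(z_0)=h^2\sum_{i,j}M_{ij}k_i^\top S k_j$ (using $b_i=0$ for $i>s$ to handle the non-stage indices), followed by the reduction $k_i=\sum_\ell V_{i\ell}\hat k_\ell$ via the nullspace basis to reach $V^\top M V$. The paper is terser and does not spell out the symplecticity step at all, whereas you make it explicit via the variational-equation lifting; that addition is correct and welcome.
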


\begin{proof}
Suppose $f$ has a quadratic first integral $z^\top C z$. Following the standard proof for Runge--Kutta methods,

$$z_1^\top C z_1 - z_0^\top C z_0  = 2 h \sum_{j+1}^m b_i z_0^\top C k_i + h^2 \sum_{i,j=1}^m M_{ij} k_i^\top C k_j.$$
Using the condition on the $b_i$, and expressing $k_1,\dots,k_s$ in the basis whose columns are $V$, i.e. $k_i = \sum_{j=1}^s V_{ij} \hat k_j$, gives the result.
\end{proof}

\begin{proposition}
Extended  phase space integrators with symmetric projection, where the extended method $\Psi$ is a composition method, are extended Runge--Kutta methods and can be written as monoimplicit symplectic Runge--Kutta methods.
\end{proposition}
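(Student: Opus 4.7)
The plan is to reduce the symmetric-projection construction to a single Runge--Kutta method on $\mathbb{R}^{2d}$ coupled to one linear constraint that determines the projection parameter, and then read off the monoimplicit and symplectic structure. Working in the coordinates of Section 2 with $z=(q,y)$, $\hat z=(x,p)$, the duplicated Hamiltonian flow takes the form $\dot z=f(\hat z),\ \dot{\hat z}=f(z)$ with $f=X_H\colon\mathbb{R}^{2d}\to\mathbb{R}^{2d}$, and the shift $\zeta\mapsto\zeta+D^\top\mu$ becomes $(z,\hat z)\mapsto(z+\nu,\,\hat z-\nu)$ for a single vector $\nu=(\mu_q,-\mu_p)\in\mathbb{R}^{2d}$. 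The defining condition $\hat\zeta_1+D^\top\mu\in\mathcal{N}$ then collapses to the single diagonalisation equation $z_1=\hat z_1$.

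Since $\Psi_h$ is a composition of exponentials of $X_{\hat H_A}$ and $X_{\hat H_B}$, each of which is one explicit Euler step on half of $(z,\hat z)$ with the other half frozen, the whole $\Psi_h$ is an explicit partitioned Runge--Kutta method for the duplicated system. I would write out its $2s$ stages as
\[
Z_i=z_0+\nu+h\sum_j \tilde a_{ij} f(\hat Z_j),\qquad \hat Z_i=z_0-\nu+h\sum_j \hat a_{ij}f(Z_j),
\]
with strictly lower triangular $\tilde a,\hat a$, and corresponding final updates with weights $\tilde b,\hat b$. Appending the outer $+D^\top\mu$ and imposing $z_1=\hat z_1$ produces the single linear equation
\[
4\nu = h\sum_j\bigl(\hat b_j f(Z_j)-\tilde b_j f(\hat Z_j)\bigr),
\]
so $\nu$ is an $\mathcal{O}(h)$ combination of the stage evaluations, and averaging the two expressions for $z_1=\hat z_1$ yields a single Runge--Kutta update $z_1=z_0+h\sum_i b_i f(W_i)$ on $\mathbb{R}^{2d}$, where the stages $W_i$ enumerate the $Z_i$ and $\hat Z_i$.

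Substituting $\nu$ back into every stage equation splits the $A$-matrix as $A=N+\tfrac14\epsilon c^\top$, where $N$ is strictly lower triangular (inherited from the explicit composition), $\epsilon_i=\pm1$ according as stage $i$ is of $Z$- or $\hat Z$-type, and $c_j$ is $\hat b_j$ or $-\tilde b_j$ correspondingly; this is exactly the monoimplicit ``rank one plus matrix of zero spectral radius'' form. To place the method inside Definition 1, I would keep $\nu$ as a single auxiliary stage with $b=0$ on that block and take the diagonalisation equation as the full-rank row of $d$. Symplecticity then follows from Proposition 3: the condition $V^\top M V=0$ on $\ker d$ unwinds, after using the rank-one form of the implicit coupling, to the symplecticity of $\Psi_h$ on $\mathbb{R}^{4d}$ together with the symmetric way $D^\top\mu$ is added before and after, which is the structure behind Ohsawa's original symplecticity proof. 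The main obstacle I expect is bookkeeping: enumerating the $2s$ partitioned stages in a consistent order so that $A$ exhibits the clean form $N+\tfrac14\epsilon c^\top$, and keeping signs straight in $\epsilon$ and $c$ when checking $V^\top M V=0$ in closed form.
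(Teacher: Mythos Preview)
Your outline is essentially the paper's approach: rewrite the symmetric projection in the $(z,\hat z)$ variables, list the stages of the composition with the shift $\pm\nu$ inserted, read off an extended Runge--Kutta method with one auxiliary block for $\nu$ and one linear constraint, then eliminate $\nu$ to obtain a monoimplicit tableau whose $A$-matrix is strictly lower triangular plus rank one. Two points of comparison are worth noting.

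First, the paper indexes by composition step rather than by partition, yielding $s$ genuine stages (one $f$-evaluation per exponential) plus a single auxiliary $k_{s+1}=\nu/h$, not $2s$. With this indexing the eliminated tableau takes the explicit closed form $A=L+\tfrac14 uv^\top$, $b_i=\tfrac12 a_i$, where $L$ has the alternating ``every-other $a_j$'' pattern below the diagonal and $u_i=(-1)^i$, $v_j=(-1)^j a_j$; your $N+\tfrac14\epsilon c^\top$ is the same object after removing redundant stages.

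Second, and more substantively, the paper does \emph{not} verify symplecticity via Proposition~3 and an appeal to the extended-phase-space symplecticity of $\Psi_h$. Instead it eliminates $\nu$ first and then checks the standard Runge--Kutta condition $b_ib_j-b_ia_{ij}-b_ja_{ji}=0$ directly from the closed form of $A$: one computes $b_ia_{ij}=\tfrac12 b_ib_j c_{j-i}$ with $c_k\in\{1,-1,3\}$ depending on the parity and sign of $k$, and the identity follows for all $i,j$. Your plan to ``unwind $V^\top MV=0$ to the symplecticity of $\Psi_h$'' is morally right but, as you anticipate, the bookkeeping is where the work is; the paper sidesteps this by doing the elementary computation on the reduced $s$-stage method, which is both shorter and yields the explicit tableau as a byproduct.
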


\begin{corollary}\ 
\begin{enumerate}
\item 
Extended phase space integrators with symmetric projection methods preserve arbitrary affine symmetries, quadratic integrals, and constant symplectic structures when there are any.
\item Monoimplicit symplectic Runge--Kutta methods of all orders exist.
\end{enumerate}
\end{corollary}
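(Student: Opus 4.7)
The plan is to derive both parts from Propositions~2--4 together with the well-known existence of arbitrary-order symplectic composition methods; no genuinely new machinery is required, and each assertion essentially unpacks one of the three preceding propositions.

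For part~(1) I would split the statement into its three preservation claims. For affine symmetries: an affine symmetry of $\dot z=f(z)$ is an affine map $a(z)=Az+b$ with $f\circ a = Af$, and the affine equivariance of extended Runge--Kutta methods (Proposition~2), combined with the identification of $\varphi_h$ as an extended Runge--Kutta method from Proposition~4, yields $a\circ\varphi_h=\varphi_h\circ a$, so $a$-symmetric initial data produces $a$-symmetric numerical orbits. For quadratic first integrals and the canonical constant symplectic form: Proposition~4 realises the scheme as an extended Runge--Kutta method whose coefficients satisfy the hypotheses of Proposition~3, namely $b_i=0$ for $i>s$ and $V^\top M V=0$; Proposition~3 then delivers both preservation statements simultaneously, since quadratic invariants with symmetric $C$ cover first integrals and antisymmetric $C$ covers constant symplectic forms.

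For part~(2) the plan is to combine Proposition~4 with a standard Yoshida triple-jump construction. For every even $2k$, such a construction produces from the basic second-order extended phase space step $\Phi_h$ a symmetric symplectic composition of classical order $2k$ in the extended phase space; Proposition~4 then asserts that appending symmetric projection converts it into a monoimplicit symplectic Runge--Kutta method on the original phase space. The one nontrivial step is to check that symmetric projection preserves the classical order: I would apply the implicit function theorem to the defining equation $D\bigl(\Phi_h(\zeta_0+D^\top\mu)+D^\top\mu\bigr)=0$ to obtain $\mu=\mathcal{O}(h^{2k+1})$ for an underlying method of order $2k$, and then invoke the standard argument for symmetric projection of symmetric methods (see Hairer--Lubich--Wanner V.4.1) to conclude that the projection contributes only $\mathcal{O}(h^{2k+1})$ perturbations.

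The conceptual content is light; the main obstacle is the order bookkeeping in part~(2), where one must confirm that neither the projection nor the inversion producing $\mu$ degrades the order of the underlying composition below $2k$.
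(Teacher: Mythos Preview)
Your proposal is correct and is exactly the natural unpacking of the corollary from Propositions~2--4; the paper itself gives no separate proof of the corollary, treating both parts as immediate once Proposition~4 has exhibited the method as a monoimplicit symplectic Runge--Kutta scheme. Your order-preservation argument via the implicit function theorem fills in a detail the paper leaves to the cited Jayawardana--Ohsawa reference, and note that since Proposition~4 in fact verifies the standard condition $b_ib_j-b_ia_{ij}-b_ja_{ji}=0$ after eliminating the constraint, you may invoke classical Runge--Kutta theory directly for the quadratic-invariant and symplecticity claims rather than routing through Proposition~3.
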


\begin{proof}
We again write the extended system as
\begin{align*}
\dot z &= f(\hat z)\\
\dot{\hat z} &= f(z).\\
\end{align*}
Let  $\Delta\colon z \mapsto (z,z)$; $S_\mu\colon (z,\hat z) \mapsto (z+\mu,z-\mu)$; $\pi\colon (z,\hat z)\to z$.
In these variables the symmetric projection method takes the form
$$ \pi \circ S_\mu \circ \Psi \circ S_\mu\circ \Delta$$
where $\mu$ is determined by the condition that $S_\mu \circ \Psi_h \circ S_\mu\in\mathcal{N}$.

We first illustrate the complete construction for the case that $\Psi_h$ is the leapfrog method $\Phi_h$. Its 
three substeps are then
\begin{align*}
z^* &= z_0 + \mu + \frac{1}{2} h f(z_0-\mu)\\
\hat z^* &= z_0 - \mu + h f(z^*)\\
z^{**} &= z^* + \frac{1}{2} h f(\hat z^*)
\end{align*}

Defining the three stages $Z_1$, $Z_2$, and $Z_3$ as the $z$-values at which $f$ is evaluated, and defining $h k_4 = \mu$, we have
\begin{align*}
Z_1 &= z_0 - h k_4 \\
Z_2 &= z_0 + h\left(\frac{1}{2}k_1 + k_4\right) \\
Z_3 &= z_0 + h(k_2 - k_4)
\end{align*}
The  condition $S_\mu \circ \Psi \circ S_\mu\in\Delta(\mathbb{R}^n)$ is
$$ z^{**} + \mu = \hat z^*-\mu$$
which leads to the constraint
$$ -\frac{1}{2}k_1 + k_2 - \frac{1}{2}k_3 - 4 k_4 = 0.$$
The update equation is $z_1 = z^{**}+\mu$ or
$$ z_1 = z_0 + h\left(\frac{1}{4}k_1 + \frac{1}{2}k_2 + \frac{1}{4} k_3\right).$$

Thus the parameters for this method are
$$ A = \begin{bmatrix}0 & 0 & 0 & -1 \\ \frac{1}{2} & 0 & 0 & 1 \\ 0 & 1 & 0 & -1 \\ 0 & 0 & 0 & 0 \end{bmatrix}$$
$$ b = \begin{bmatrix}\frac{1}{4} & \frac{1}{2} & \frac{1}{4} & 0 \end{bmatrix}$$
$$ d = \begin{bmatrix} -\frac{1}{2} & 1 & -\frac{1}{2} & -4 \end{bmatrix}.$$

(For the null space of $d$ we take
$$V = \begin{bmatrix} 2 & -1 & -8 \\ 1 & 0 & 0 \\ 0 & 1 & 0 \\ 0 & 0 & 1 \end{bmatrix}.$$
A direct calculation shows that 
$$M = \frac{1}{16}\begin{bmatrix}1 & -2 & 1 & 4 \\ -2 & 4 & -2 & -8 \\ 1 & -2 & 1 & 4 \\ 4 & -8 & 4 & 0\end{bmatrix}$$
and
$V^\top M V = 0$, confirming that the method is quadratic-preserving.)

The extra stage $k_4$ can be explicitly eliminated using the constraint, giving the 3-stage monoimplicit symplectic Runge--Kutta method with tableau
$$
\renewcommand\arraystretch{1.2}
\begin{array}{c|ccc}
0 & \frac{1}{8} & -\frac{1}{4} & \frac{1}{8} \\
\frac{1}{2} & \frac{3}{8} & \frac{1}{4} & -\frac{1}{8}\\
1 & \frac{1}{8} &\frac{3}{4} & \frac{1}{8}\\
\hline
& \frac{1}{4} & \frac{1}{2} & \frac{1}{4}
\end{array}.
$$

For the general case, let $\Psi_h$ be the composition method with time steps given by parameters $a_1,\dots,a_s$, i.e.
$$\Psi_h = \exp(a_s h X_{\hat H_B})\exp(a_{s-1} h X_{\hat H_A}) \dots \exp(a_2 h X_{\hat H_B})\exp(a_{1} h X_{\hat H_A}).$$
Writing out the stages as above, and eliminating the constraint $\mu$, leads to an $s$-stage monoimplicit Runge--Kutta method with parameters
$$ A = L + \frac{1}{4}u v^\top,\quad b_i = \frac{1}{2}a_i$$
where
$$ L = \begin{bmatrix} 0 & \dots\\
a_1 & 0 & \dots \\
0 & a_2 & 0 & \dots \\
a_1 & 0 & a_3 & 0 & \dots \\
0 & a_2 & 0 & a_4 & 0 & \dots \\
& & \ddots & & \ddots \\
a_1 & 0 & a_3 & \dots & & a_{s-1} & 0
\end{bmatrix}
,\quad
u_i = (-1)^i,\quad v_j= a_j (-1)^{j}.$$
Therefore
$$ b_i a_{ij} = \frac{1}{2}b_i b_j c_{j-i}$$
where
$$c_k = 
\begin{cases}
1 & k \hbox{\rm\ even} \\
-1 & k\hbox{\rm\ odd}, k>0\\
3 & k  \hbox{\rm\ odd}, k<0
\end{cases}
$$
and thus we have
$$b_i b_j - b_i a_{ij} - b_j a_{ji} = 0$$
for all $i$ and $j$, which is the condition for a Runge--Kutta method to be symplectic.
\end{proof}

\section{Discussion}
The results here have been established by direct calculation, but the methods appear quite natural and intrinsically defined. One could seek direct geometric proofs, not relying on B-series, of the pseudosymplecticity and pseudosymmetry orders for the midpoint projection method when the extended method is an arbitrary symplectic integrator. Likewise, for the symmetric projection method, the diagonal $\mathcal{N}$ is a symplectic subspace of the extended symplectic space,  
suggesting an approach using the symplectic geometry of constraints \cite{shake}. The question mentioned earlier, of the best pseudosymplectic method to use in applications, and any potential issues arising from nonsymplecticity, should be examined. Finally, we suggest determining the  entire set of monoimplicit symplectic Runge--Kutta methods and their relative merits.

\begin{paragraph}{Dedication}\ \\
A preliminary version of this work was presented at ANODE 2023 in honour of John Butcher's 90th birthday. It was therefore very pleasing and appropriate to find that, as the work developed, it turned out to involve Runge--Kutta methods and Butcher series so intimately. Happy birthday, John!
\end{paragraph}

\end{document}